\theoremstyle{plain}
\newtheorem{theorem}{Theorem}
\newtheorem{lemma}[theorem]{Lemma}
\newtheorem{corollary}[theorem]{Corollary}
\theoremstyle{definition}
\newtheorem{definition}[theorem]{Definition}
\newcommand{\seven}{\mathsf S_7}
\newcommand{\Chl}[1]{\textup{C}_{\tup{l}}(#1)}
\newcommand{\Chr}[1]{\textup{C}_{\tup{r}}(#1)}
\newcommand{\cornl}[1]{\textup{lc(#1)}}
\newcommand{\cornr}[1]{\textup{rc(#1)}}
\newcommand{\Rank}[1]{\textup{Rank(#1)}}
\begin{document}
\title{On slim rectangular lattices}
\author[G.\ Gr\"atzer]{George Gr\"atzer}
\email{gratzer@me.com}
\urladdr{http://server.maths.umanitoba.ca/homepages/gratzer/}
\address{University of Manitoba}
\date{\today}

\begin{abstract} 
Let $L$ be a slim, planar, semimodular lattice
(slim means that it does not contain an ${\mathsf M}_3$-sublattice).
We call the interval $I = [o, i]$ of $L$ \emph{rectangular},
if there are complementary $a, b \in  I$ such that $a$ is to the left of $b$.

We claim that a rectangular interval of a slim rectangular lattice 
is also a slim rectangular lattice.
We will present some applications, 
including a recent result of G. Cz\'edli.

In a paper with E. Knapp about a dozen years ago, 
we introduced \emph{natural diagrams} for slim rectangular lattices.
Five years later, G. Cz\'edli introduced \emph{${\E C}_1$-diagrams}.
We prove that they are the same.
\end{abstract}

\maketitle

\section{Introduction}\label{S:recIntroduction}
%Section~\ref{S:recIntroduction}

In 2006, we started studying planar semimodular lattices
in my papers with E.~Knapp  \cite{GKn07}--\cite{GKn10}.
More than four dozen publications have been devoted to this topic
since; see G. Cz\'edli's list\\
\verb+http://www.math.u-szeged.hu/~czedli/m/listak/publ-psml.pdf+

\smallskip

An \emph{SPS lattice} $L$ is a planar semimodular lattice that is also \emph{slim}
(it does not contain an $\SM 3$-sublattice).

Following my paper with E.~Knapp~\cite{GKn09},
a planar semimodular lattice $L$ is \emph{rectangular},
if its left boundary chain has exactly one doubly-irreducible element 
other than the bounds (the \emph{left corner})
and its right boundary chain
has exactly one doubly-irreducible element 
other than the bounds (the \emph{right corner})
and the two corners are complementary.
An \emph{SR lattice} $L$ is a rectangular lattice that is also \emph{slim}.

Rectangular lattices are easier to work with than planar semimodular lattices, 
because they have much more structure. 
Moreover, a planar semimodular lattice has~a (congruence-preserving) extension
to a rectangular lattice, so we can prove many result for SPS
lattices by verifying them for SR lattices
(G.~Gr\"atzer and E.~Knapp~\cite{GKn09}).

It turns out that there is another way to obtain SR lattices
from SPS lattices.
Before we state it, we need a definition. 
Let $L$ be a planar lattice.
We call the interval $I = [o, i]$ of $L$ \emph{rectangular},
if there are complementary $a, b \in  I$ 
such that the element $a$ is to the left of the element $b$.

Now we state a new property of SR lattices.

\begin{theorem}\label{T:Main}
Let $L$ be an slim, planar, semimodular lattice 
and let $I$ be a rectangular interval of $L$.
Then the lattice $I$ is slim and rectangular.
\end{theorem}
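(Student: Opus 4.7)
The plan is to verify each property of a slim rectangular lattice for $I$ in turn. The simpler ones are immediate: as a sublattice of $L$, $I$ inherits slimness (no $\SM 3$-sublattice); intervals of semimodular lattices are semimodular; and the restriction of a planar diagram of $L$ to $I$ is a planar diagram of $I$. Thus $I$ is an SPS lattice, and what remains is to show that $I$ satisfies the definition of rectangularity: exactly one doubly-irreducible element on each boundary chain other than the bounds, with the two being complementary.

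The natural candidates for the left and right corners of $I$ are $a$ and $b$. I would first argue that $a\in\Chl{I}$ and $b\in\Chr{I}$. Using the induced planar diagram, any element of $I$ strictly to the left of $a$ is in particular left of $b$, and, together with $a\vee b=i$ and $a\wedge b=o$, this forces such an element to equal $o$; hence $a$ lies on the left boundary chain of $I$. Symmetrically $b\in\Chr{I}$.

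Next, $a$ is doubly-irreducible in $I$. If $a$ had two distinct upper covers in $I$, then one of them would lie beside the canonical ascending chain from $a$ to $i$, and slim semimodularity combined with $a\vee b=i$ would produce a covering square whose left edge forces an element strictly between $o$ and $a$ to lie below $b$, violating $a\wedge b=o$. The dual argument rules out two lower covers, so $a$ is doubly-irreducible; symmetrically, so is $b$.

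The main obstacle is the uniqueness: no $a'\in\Chl{I}\setminus\{o,a,i\}$ may be doubly-irreducible in $I$. Suppose $a'<a$ (the case $a'>a$ is symmetric). Then $a'\vee b<i$, otherwise semimodular cover-chasing would force $a'=a$; and $a'\vee b$ lies strictly to the right of $\Chl{I}$ because $b$ is right of $a$. A short cover-chase between $a'$ and $a'\vee b$ exhibits a cover of $a'$ interior to $I$ in addition to the one along $\Chl{I}$, contradicting double-irreducibility. The symmetric argument on $\Chr{I}$ pins down $b$. Making these cover-chasing arguments rigorous---invoking the standard structural facts for SR lattices from G.~Gr\"atzer and E.~Knapp~\cite{GKn09}---is where the main technical work lies.
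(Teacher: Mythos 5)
Your overall strategy---verifying the definition of rectangularity directly by showing that $a$ and $b$ are the corners of $I$---differs from the paper's proof, which inducts on $\Rank K$ via the Structure Theorem: delete a minimal fork to get $K^-$ with $K = K^-[C_{n-1}]$, and split into three cases according to whether the fork elements $m, x_i, y_i$ avoid $I$, whether $m$ is internal to $I$ (then $I$ is itself a fork extension of a rectangular interval of $K^-$), or whether only some $x_i$ or $y_i$ is internal (then $I$ arises from a rectangular interval of $K^-$ by replacing a cover-preserving $\SC m \times \SC 2$ by $\SC m \times \SC 3$). A direct verification is not hopeless, but as written your argument has real gaps at exactly the points where the work lies.

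Concretely: (1) your argument that $a \in \Chl{I}$ claims that an element of $I$ strictly to the left of $a$ is ``forced to equal $o$'' by $a \jj b = i$ and $a \mm b = o$; no reason is given, and the fact needed here is essentially the meet identity $x = (x \mm a) \jj (x \mm b)$ of Corollaries~\ref{C:meet} and~\ref{C:abc}, which in this paper is \emph{deduced from} Theorem~\ref{T:Main}---invoking it would be circular unless you prove it independently. (2) In the uniqueness step you assert that $a' \in \Chl{I}$ with $o < a' < a$ satisfies $a' \jj b < i$, ``otherwise cover-chasing would force $a' = a$.'' This is false: in $\seven$ itself (Figure~\ref{F:S7+}), with $I = \seven$ and corners $a, b$, the element $a' = m \mm a$ satisfies $o < a' < a$ and $a' \jj b = t = i$. (The conclusion that $a'$ is meet-reducible still holds there, via the second cover $m$, but your route to it breaks.) (3) The double irreducibility of $a$ is argued only by an unspecified covering-square contradiction. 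Each of these is a substantive structural lemma about SR lattices rather than a routine cover-chase; the induction on fork insertions is precisely what lets the paper avoid proving them from scratch.
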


In a paper with E. Knapp about a dozen years ago, 
we introduced \emph{natural diagrams} for SR lattices.
Five years later, G. Cz\'edli introduced \emph{${\E C}_1$-diagrams}.
We prove that they are the same.

We will present some applications, including a recent result of G. Cz\'edli \cite{gCcc}.

For the background of this topic and its applications outside lattice theory, 
see Section 1.2 of G. Cz\'edli and G. Gr\"atzer~\cite{CG21}.

\subsection*{Statements and declarations}\hfill

\emph{Data availability statement.} 
Data sharing is not applicable to this article 
as no datasets were generated or analysed during the current study.

\emph{Competing interests.} Not applicable as there are no interests to report.

\subsection*{Basic concepts and notation.}

The basic concepts and notation not defined in this note 
are freely available in Part~I of the book \cite{CFL2}, see \\
{\tt arXiv:2104.06539}\\
We will reference it as CFL2.

\section{Fork extensions}

We discuss in Section 4.3 of CFL2
a result of G.~Cz\'edli and E.\,T. Schmidt~\cite{CS13}:
for an SPS lattice $L$ and covering square $C$ in $L$,
we can \emph{insert} a fork in $L$ at $C$ 
to obtain the lattice extension $L[C]$,
which is also an SPS lattice, see Figure~\ref{F:fork}.
In this figure, the elements of the covering square $C$ 
are grey filled, the elements of the fork are black filled.
The third and fourth diagrams represent the same lattice,
\emph{De~gustibus non est disputandum}.

\begin{figure}[htb]%Figure~\ref{F:fork}
\centerline{\includegraphics[scale=0.60]{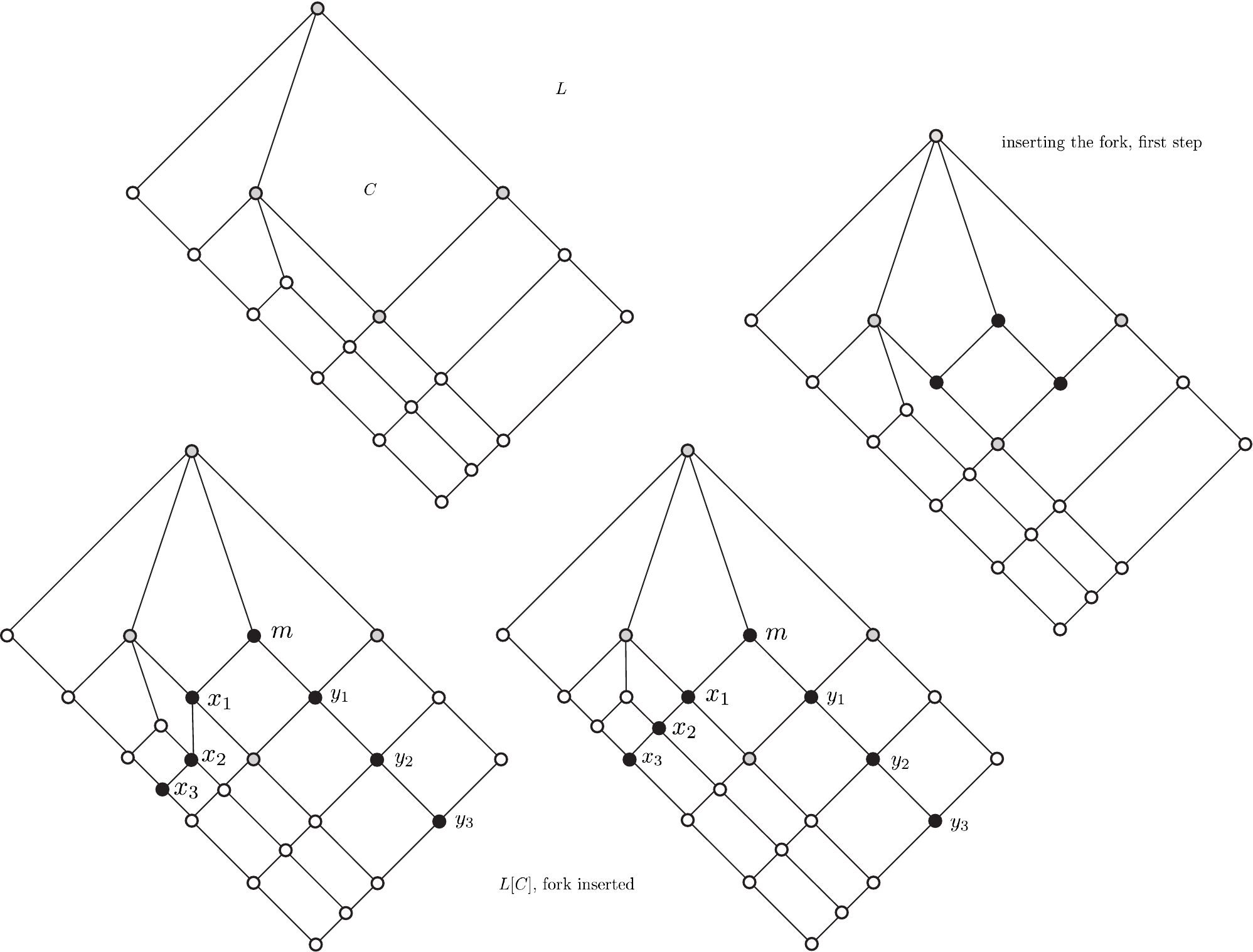}}
\caption{Inserting a fork into $L$ at $C$}\label{F:fork}
\end{figure}

As illustrated by Figure~\ref{F:fork-delete}, 
we can sometimes \emph{delete} a fork.
Let $L$  be an SPS lattice and let $S$ be a covering $\seven$ in $L$,
with middle element $m$, left corner $a$ and right corner $b$.
Let us assume that the top element~$t$ of $S$ is \emph{minimal},
that is, there is no $S'$ a covering $\seven$ 
with top element~$t'$ that is smaller: that is, $t' < t$.

\begin{figure}[t!]%Figure~\ref{F:fork-delete}
\centerline{\includegraphics[scale=.8]{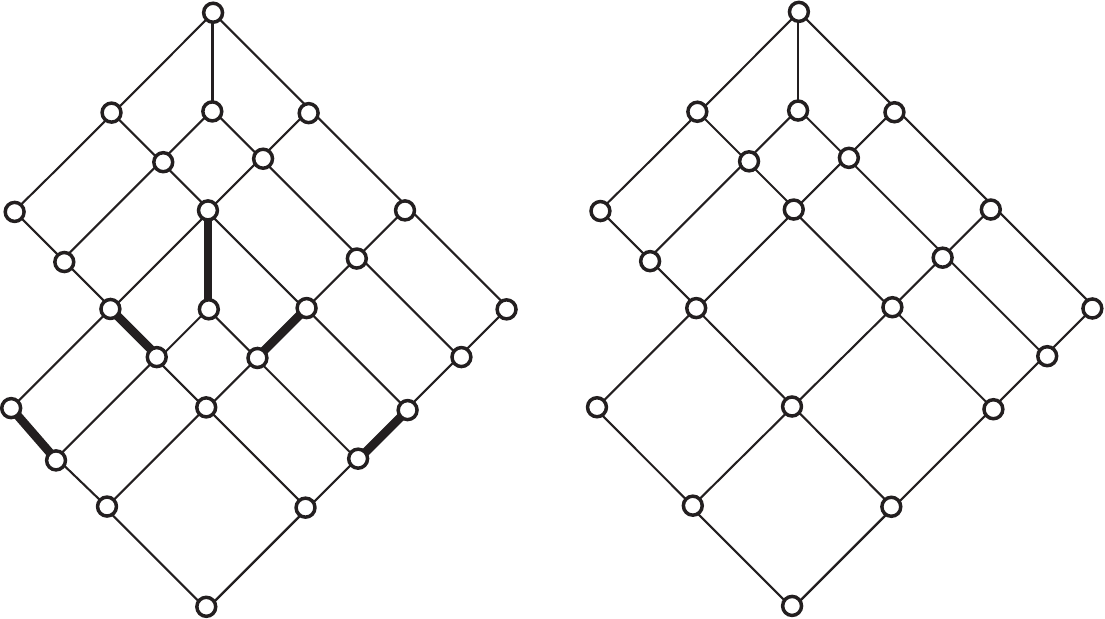}}
\caption{Deleting a fork}\label{F:fork-delete}
\end{figure}

\begin{lemma}[G. Cz\'edli and E.\,T. Schmidt \cite{CS13}]\label{L:delete}
%Lemma~\ref{L:delete}
Let $L$  be an SR lattice and let \[S = \set{o, m \mm a, m \mm b, a,b,m,t}\] 
be a minimal cover\-ing~$\seven$ in $L$.
Then~$L$ has a sublattice~$L^-$ with a covering square 
\[
   C = S - \set{m, m \mm a, m \mm b} = \set{o, a,b,t}
\]
such that $L = L^-[C]$. In other words, we can delete the fork in $S$
and the lattice~$L^-$ is the lattice $L$ with the fork deleted.
\end{lemma}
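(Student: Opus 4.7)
The plan is to define $L^-$ explicitly as the subposet $L \setminus \set{m, m \mm a, m \mm b}$ with the order inherited from $L$, and then verify three items in turn: (i) $L^-$ is an SPS sublattice of $L$; (ii) $C = \set{o,a,b,t}$ is a covering square of $L^-$; and (iii) the Cz\'edli--Schmidt fork-insertion applied to $L^-$ at $C$ recovers $L$.

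For (i), I would show that a join or meet in $L$ of two elements of $L^-$ never equals one of the three removed elements $m$, $m \mm a$, $m \mm b$. A decomposition $x \vee y = m$ forces $x, y \leq m$; the key claim is that $[o, m]$ in $L$ is exactly the covering square $\set{o, m \mm a, m \mm b, m}$, so $x, y \in L^- \cap [o, m] = \set{o}$ and hence $x \vee y = o \neq m$. The analyses for $x \vee y = m \mm a$ and $x \vee y = m \mm b$ use the prime intervals $[o, m \mm a]$ and $[o, m \mm b]$, and the three dual meet cases are handled analogously using the prime intervals $[a, t]$ and $[b, t]$. Slimness of $L^-$ is automatic; planarity and semimodularity should be preserved because we are deleting exactly the ``interior'' of a single covering $\seven$ while retaining its covering-square frame.

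For (ii), the intervals $[o, a]$, $[o, b]$, $[a, t]$, $[b, t]$ in $L$ have unique interior points $m \mm a$, $m \mm b$, $m$, $m$ respectively, and all of these lie in the removed set; hence $o \prec_{L^-} a$, $o \prec_{L^-} b$, $a \prec_{L^-} t$, $b \prec_{L^-} t$ follow at once from (i).

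For (iii), I would match $L^-[C]$ against $L$ element by element: the fork-insertion recipe at $C$ introduces exactly the three new elements with the $\seven$-relations of Figure~\ref{F:fork-delete}, and the minimality of $t$ guarantees that no upward propagation of the fork needs to be reversed. The main obstacle is step~(i): showing that the three deleted elements are genuinely isolated, i.e., that they admit no ``outside'' representation as joins or meets in $L$. This is precisely where the minimality hypothesis on $t$ does its work, by ruling out a competing covering $\seven$ whose top would sit strictly below $t$ and entangle with $S$ through an alternative decomposition of $m$, $m \mm a$, or $m \mm b$.
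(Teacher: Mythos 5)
The paper does not prove this lemma at all---it quotes it from Cz\'edli--Schmidt \cite{CS13}---so your attempt has to stand on its own, and there it has a genuine gap: the set you delete is too small. A fork inserted at a $4$-cell does not consist of three elements in general; it consists of $m$ together with two chains $x_1 = m \mm a, x_2, \dots, x_{k_l}$ and $y_1 = m \mm b, y_2, \dots, y_{k_r}$ that propagate down-left and down-right until they reach the boundary (these are exactly the ``sequences of elements $x_1, \dots$ on the left going down and $y_1, \dots$ on the right going down'' of Section~\ref{S:Proving} and of the proof of Lemma~\ref{L:C2}). Consequently $L^-$ must be $L$ minus the \emph{whole} fork, not $L \setminus \set{m, m\mm a, m \mm b}$. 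Concretely, let $L$ be the grid $\SC 3 \times \SC 3$ with a fork inserted at its top $4$-cell, so that the fork is $\set{m, x_1, x_2, y_1, y_2}$ with $x_2$ and $y_2$ sitting on the two lower boundary chains. The unique (hence minimal) covering $\seven$ is $S$ with $m \mm a = x_1$ and $m \mm b = y_1$, yet $x_2 \jj y_2 = m$; so $L \setminus \set{m, x_1, y_1}$ contains $x_2$ and $y_2$ but not their join, and is not a sublattice of $L$.

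The step of your argument that breaks is the passage from ``$x \jj y = m$ forces $x, y \le m$'' to ``$x, y \in [o,m]$'': being below $m$ only places $x$ and $y$ in $\id m$, which properly contains the covering square $[o,m]$ and, whenever a trajectory has length at least $2$, contains lower trajectory elements such as $x_2, y_2$ that survive your deletion and join to $m$. The same oversight reappears in your step (iii), where you assert that fork insertion at $C$ ``introduces exactly the three new elements'': if only $m$, $m \mm a$, $m \mm b$ were added, subdividing the edges $[o,a]$ and $[o,b]$ would turn the $4$-cells adjacent to them from below into pentagons and semimodularity would fail, which is precisely why the Cz\'edli--Schmidt construction forces the longer trajectories. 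Minimality of $t$ is not what rescues any of this (the counterexample above has a unique covering $\seven$); its actual role is to make $\id a$ and $\id b$ distributive, as in the proof of Theorem~\ref{T:Structure}, so that the full trajectories are well defined and the deletion is reversed by a single fork insertion. Your outline would be repaired by taking $L^-$ to be $L$ minus $m$ and both complete trajectories, and then running your steps (i)--(iii) for that larger deleted set.
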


The structure of SR lattices is described as follows, 
see G.~Cz\'edli and E.\,T. Schmidt~\cite{CS13}.

\begin{theorem}[Structure Theorem]\label{T:structureshort}
%Theorem~\ref{T:structureshort}
A slim rectangular lattice $K$ can be obtained 
from a grid $G$ by inserting forks \lp$n$-times\rp.
\end{theorem}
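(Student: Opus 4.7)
The plan is to prove the theorem by induction on $|K|$, using the fork-deletion lemma (Lemma~\ref{L:delete}) as the engine of the inductive step.

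\smallskip

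\emph{Base case.} If $K$ is itself a grid, there is nothing to do: $K$ is obtained from the grid $K$ by inserting forks zero times.

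\smallskip

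\emph{Inductive step.} Suppose $K$ is an SR lattice that is not a grid. The first, and principal, step is to verify that $K$ must then contain a covering $\seven$. Heuristically, a grid is characterized among SR lattices by the property that each covering square sits ``by itself'': it is never the base square of an $\seven$. Any departure from this grid pattern forces, via semimodularity and slimness, the presence of a covering $\seven$ somewhere in the diagram. Granted this, the finite set of covering $\seven$'s in $K$ contains one whose top element $t$ is minimal in $K$, and this $\seven$ is then a minimal covering $\seven$ in the sense required by Lemma~\ref{L:delete}.

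\smallskip

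Now apply Lemma~\ref{L:delete} to the chosen $\seven$ to obtain an SR sublattice $K^{-}$ with a covering square $C$ such that $K = K^{-}[C]$. Because $|K^{-}| = |K|-3 < |K|$, the inductive hypothesis supplies a grid $G$ and covering squares $C_1,\dots,C_n$ with $K^{-} = G[C_1]\cdots[C_n]$, and therefore
\[
   K = G[C_1]\cdots[C_n][C],
\]
which is $G$ with $n+1$ forks inserted. This completes the induction.

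\smallskip

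The step I expect to be the main obstacle is showing that a non-grid SR lattice must contain a covering $\seven$; this is essentially a structural lemma saying that forks are the only mechanism by which an SR lattice can deviate from being a grid. One natural approach is to analyze the covering squares along the boundary chains: in a grid every covering square is ``planar-isolated,'' and a slimness-plus-semimodularity argument on a would-be non-planar-isolated square should generate the characteristic $\seven$-pattern. A secondary, and purely routine, verification is that $K^{-}$ inherits both slimness (immediate, since removing elements cannot create an $\mathsf{M}_3$-sublattice) and rectangularity (the deleted elements $m$, $m\wedge a$, $m\wedge b$ are all interior to the diagram, so the left and right boundary chains, together with their unique non-trivial doubly-irreducible corners, are untouched); with this in hand, the induction hypothesis genuinely applies to $K^{-}$.
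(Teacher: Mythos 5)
Your overall skeleton --- induction with Lemma~\ref{L:delete} as the engine, removing a minimal covering $\seven$ and applying the inductive hypothesis to $K^-$ --- is exactly the strategy the paper itself uses (for the Strong Version, Theorem~\ref{T:Structure}; the weak version in question is simply cited from G.~Cz\'edli and E.\,T.~Schmidt~\cite{CS13}). However, the step you yourself flag as ``the main obstacle'' is left entirely unproved, and it is a genuine gap: you never establish that a non-grid SR lattice contains a covering $\seven$, offering only the heuristic that ``any departure from the grid pattern forces \dots\ the presence of a covering $\seven$.'' Without this, the induction cannot start its recursive descent, since Lemma~\ref{L:delete} has nothing to act on. The standard way to close the gap is via two known facts: an SPS lattice with no covering $\seven$ is distributive, and a distributive slim rectangular lattice is a grid (isomorphic to the product of its two lower boundary chains); both are in G.~Gr\"atzer and E.~Knapp~\cite{GKn09}, and the paper invokes precisely this in the $n=0$ base case of its proof of Theorem~\ref{T:Structure} (``if $n=0$, then $K$ is distributive by \cite{GKn09}''). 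Your proposed route of analyzing ``planar-isolated'' covering squares along the boundary is not developed enough to count as a proof of either fact.

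Two smaller points. First, $|K^-| = |K|-3$ is not correct in general: deleting a fork removes the middle element $m$ together with the two descending chains $x_1,\dots$ and $y_1,\dots$, so at least three elements but possibly many more; fortunately only $|K^-|<|K|$ is needed, so this does not damage the induction. Second, your ``routine'' verification that $K^-$ is again slim and rectangular is indeed needed for the inductive hypothesis to apply, but it is part of the content of Lemma~\ref{L:delete} as proved in \cite{CS13} (the conclusion $K = K^-[C]$ with $K^-$ an SR lattice), so you may simply cite it rather than re-argue it via the position of the deleted elements.
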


We thus associate a natural number $n$ with an SR lattice~$K$; 
we call it the \emph{rank} of $K$, and denote it by $\Rank K$. 
It is easy to see that the $\Rank K$ is well defined. 

There is a stronger version of Theorem~\ref{T:structureshort},
implicit in G.~Cz\'edli and E.\,T.~Schmidt~\cite{CS13}.
We~present it with a short proof.

\begin{theorem}[Structure Theorem, Strong Version]\label{T:Structure}
%Theorem~\ref{T:Structure}
For every slim rectangular lattice~$K$, 
there is a grid~$G$, a natural number $n = \Rank K$,
and sequences 
\begin{equation}\label{E:latticesequence}%\eqref{E:latticesequence}
G = K_1, K_2, \dots, K_{n-1}, K_n = K
\end{equation}
of slim rectangular lattices and 
\begin{equation}\label{E:Csequence}%\eqref{E:Csequence}
C_1 = \set{o_1, c_1,d_1, i_1}, C_2  = \set{o_2, c_2,d_2, i_2} , 
      \dots, C_{n-1} = \set{o_{n-1}, c_{n-1},d_{n-1}, i_{n-1}}
\end{equation}
of $4$-cells in the appropriate lattices such that 
\begin{equation}\label{E:Clatticesequence}%\eqref{E:Clatticesequence}
  G = K_1, K_1[C_1] = K_2, \dots, K_{n-1}[C_{n-1}] = K_n= K
\end{equation}
and the principal ideals $\id{c_{n-1}}$ and $\id{d_{n-1}}$ are distributive.
\end{theorem}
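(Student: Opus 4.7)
The plan is to combine Lemma~\ref{L:delete} with the ordinary Structure Theorem~\ref{T:structureshort} by stripping off one carefully chosen fork from $K$ at the very end of the construction. If $\Rank K = 0$, then $K$ is a grid and there is nothing to verify. Assume from now on that $n \geq 1$, so that $K$ is not a grid and hence contains a covering~$\seven$. Pick such a covering~$\seven$ whose top element~$t$ is as small as possible, say
\[
   S = \set{o,\ m \mm a,\ m \mm b,\ a,\ b,\ m,\ t}.
\]
By Lemma~\ref{L:delete}, there is an SR sublattice $K^- \subseteq K$ with covering square $C = \set{o,a,b,t}$ satisfying $K = K^-[C]$; and because each fork insertion raises the rank by one, $\Rank K^- = n-1$. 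Applying Theorem~\ref{T:structureshort} to $K^-$ furnishes a grid~$G$, SR lattices $G = K_1, \dots, K_{n-1} = K^-$, and 4-cells $C_1, \dots, C_{n-2}$ realizing \eqref{E:latticesequence}--\eqref{E:Clatticesequence} for~$K^-$. Setting $C_{n-1} := C$ and $K_n := K^-[C] = K$ extends these sequences to the ones required for~$K$.

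The new content is the distributivity of the principal ideals of $c_{n-1} = a$ and $d_{n-1} = b$. I verify this first inside~$K$. A principal ideal of an SPS lattice is again an SPS lattice (slimness, planarity, and semimodularity are all hereditary for principal ideals), so $\id{a}$ computed in~$K$ is SPS. A standard fact from the Cz\'edli--Schmidt theory, recorded in CFL2, states that an SPS lattice is distributive precisely when it contains no covering~$\seven$. If $\id{a}$ in~$K$ were not distributive, it would therefore contain a covering~$\seven$ whose top element is $\leq a < t$, contradicting the minimality of the~$\seven$ chosen above. Hence $\id{a}$ in~$K$ is distributive, and symmetrically so is $\id{b}$ in~$K$.

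To descend from $K$ to $K_{n-1} = K^-$, observe that the ideal of~$a$ computed inside the sublattice~$K^-$ is itself a sublattice of the ideal of~$a$ computed inside~$K$; since distributivity is inherited by sublattices, the ideal of~$a$ in $K_{n-1}$ is distributive, and likewise for~$b$. This is the extra clause beyond what Theorem~\ref{T:structureshort} already supplies, so the proof is complete.

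The main obstacle is citational rather than conceptual: one has to point to two pieces of the Cz\'edli--Schmidt foundations --- that principal ideals of SPS lattices are SPS, and that an SPS lattice is distributive iff it contains no covering~$\seven$. Both are in CFL2. With these facts in hand the argument is a one-step reduction using Lemma~\ref{L:delete} rather than a genuine induction on $n$, which is why the authors can advertise a ``short proof.''
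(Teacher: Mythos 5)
Your proposal is correct and follows essentially the same route as the paper: delete a minimal covering $\seven$ via Lemma~\ref{L:delete}, obtain the rest of the sequence for $K^-$ (you cite the weak Structure Theorem where the paper runs an explicit induction, but this is the same reduction), and derive the distributivity of $\id{c_{n-1}}$ and $\id{d_{n-1}}$ from the minimality of $S$. In fact you supply more detail than the paper on that last step, which the paper settles with a single sentence, by invoking the characterization of distributive SPS lattices as those with no covering $\seven$.
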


\begin{proof}
We prove by induction on $n$.
If $n = 0$, then $K$ is distributive by G.~Gr\"atzer and E. Knapp \cite{GKn09}, 
so the statement is trivial.
Now let us assume that the statement holds for $n-1$. 
Let $K$ be an~SR lattice with $n$ covering $\seven$-s.
As in Lemma~\ref{L:delete}, we take $S$, a~\emph{minimal} covering $\seven$ in $K$.
Then we form the  sublattice~$K^-$ by deleting the fork at $S$.
So~we get a covering square $C = C_{n-1} = \set{o_{n-1}, c_{n-1},d_{n-1}, i_{n-1}}$
of~$K^-$ such that $K = K^-[C]$. 
Since $K^-$ has $n-1$ covering $\seven$-s, we get the sequence
\[
   G = K_1, K_1[C_1] = K_2, \dots, K_{n-2}[C_{n-2}] = K_{n-1} = K^-,
\]
which, along with $K = K^-[C]$, prove the statement for $K$.
The minimality of $S$ implies that 
the principal ideals $\id{c_{n-1}}$ and~$\id{d_{n-1}}$ are distributive.
\end{proof}

\section{Proving Theorem~\ref{T:Main}}\label{S:Proving}
%Section~\ref{S:Proving}

Theorem~\ref{T:Main} obviously holds for grids.

Otherwise, we can assume that the SR lattice $K$ is not a grid,
so  $n = \Rank K>1$. 
Let~$K^-$ be the lattice we obtain 
by deleting a minimal fork in~$K^-$ at the covering square
 \[
    C_{n-1} = \set{o_{n-1}, c_{n-1},d_{n-1}, i_{n-1}}.
 \]
We obtain $K$ from $K^-$ by inserting a fork at $C_{n-1}$.
We add the element $m$ in the middle of $C_{n-1}$, 
and add the sequences of elements $x_1, \dots$ 
on the left going down and $y_1, \dots$ 
on the right going down as in Figure~\ref{F:fork}.

Let  $I$ be a rectangular interval in $K$ with corners $a, b$, 
where $a$ is to the left of $b$.
We want to prove that $I$ is an SR lattice.
Of~course, the lattice~$I$ is slim.

We induct on $n = \Rank K$. 
There are three subcases.

Case 1. $I$ is disjoint to $\id m$, as illustrated in Figure~\ref{F:forkcase1}.
Then the interval $I$ is not changed as we add the fork to $K^-$. 
By induction, $I$ is rectangular in $K^-$, therefore,~$I$ is also rectangular in $K$.

Case 2.  In Figure~\ref{F:forkcase2} (and Figure~\ref{F:forkcase3}), 
the bold lines form the boundary of the rectangular sublattice $I$ in $K^-$,
the elements of $C_{n-1}$ are grey filled,
and the elements~$m$, $x_1$, \dots,  $y_1$, \dots\ are black filled.
The element $m$ is internal in $I$,
so the element $a$ is $c_{n-1}$ or it is to the left of~$c_{n-1}$
and symmetrically, see Figure~\ref{F:forkcase2}.
Therefore, $C_{n-1} = [o_{n-1},i_{n-1}]_{K^-}$ is a covering square in ${K^-}$
and we obtain the interval $[o_{n-1},i_{n-1}]_{K}$ of $K$
by adding a fork to $C_{n-1}$ at  $[o_{n-1},i_{n-1}]_{K^-}$.
A fork extension of an SR lattice is also an SR lattice,
so we conclude that $I$ is an SR lattice.

Case 3.  $m$ is not an internal element of $I$ 
but some $x_i$ or $y_i$ is, see Figure~\ref{F:forkcase3},
where~$y_2$ is an internal element of $I$.
By utilizing that $\id{d_{n-1}}$ is distributive,
we conclude that we obtain  $I$ from  $[o,i]_{K^-}$ 
by replacing a cover preserving $\SC m \times \SC 2$
by $\SC m \times \SC 3$, 
and so $I$ remains rectangular.

\section{Applications of Theorem~\ref{T:Main}}\label{S:Applications}
%Section~\ref{S:Applications}

The next statement follows directly from Theorem~\ref{T:Main}.

\begin{corollary}\label{C:main}
%Corollary~\ref{X:main}
Let $L$ be an SPS lattice and let $I$ be a rectangular interval of~$L$.
Let \lp P\rp be any property of SR lattices. 
Then the property \lp P\rp holds for the lattice $I$.
\end{corollary}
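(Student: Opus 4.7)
The plan is to invoke Theorem~\ref{T:Main} directly and then read off the conclusion. Given an SPS lattice $L$ and a rectangular interval $I$ of $L$, Theorem~\ref{T:Main} tells us that $I$ is itself a slim rectangular lattice, that is, an SR lattice. By hypothesis, the property \lp P\rp\ is a property of SR lattices, which I will interpret as a predicate that holds of every SR lattice. Specializing this universal statement to the particular SR lattice $I$ yields immediately that \lp P\rp\ holds for $I$.

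There is essentially no obstacle. The only conceptual step is recognizing that Theorem~\ref{T:Main} already does all of the work: the corollary is just the observation that once we know $I$ belongs to the class of SR lattices, any universal theorem about that class applies to $I$. So the proof is a one-liner citing Theorem~\ref{T:Main}, with no case analysis, no induction, and no further appeal to the structure of forks or to the Structure Theorem.
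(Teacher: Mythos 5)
Your proposal is correct and matches the paper exactly: the paper offers no separate argument, stating only that the corollary ``follows directly from Theorem~\ref{T:Main},'' which is precisely your one-line deduction that $I$ is an SR lattice and hence satisfies any property holding for all SR lattices. No further comment is needed.
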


 \newpage
 
%\begin{figure}[p]
\centerline{\includegraphics[scale=.9]{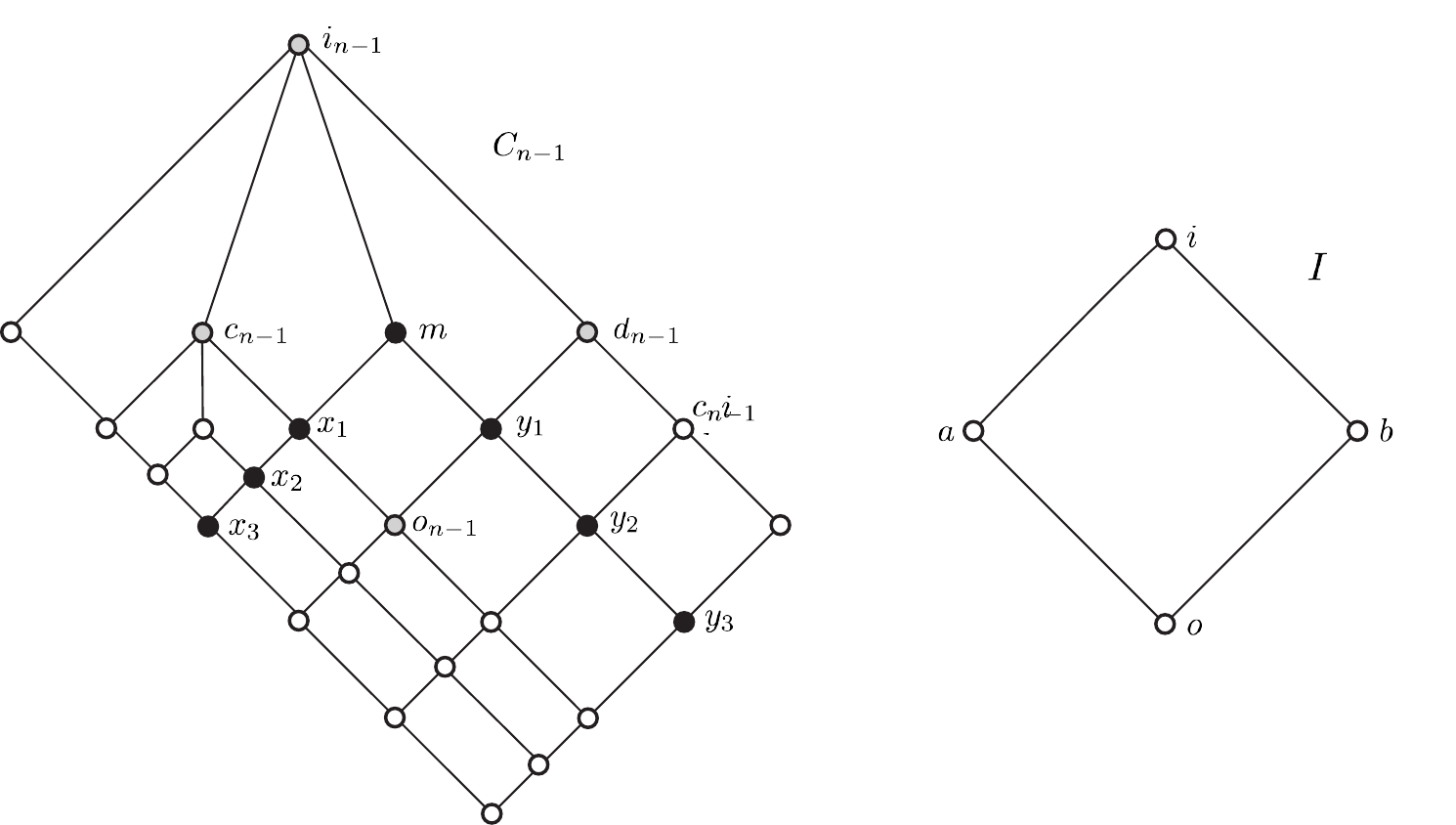}}
%\caption{forkcase1}
%\label{F:forkcase1}

\begin{figure}[htb]
%\centerline{\includegraphics[scale=.65]{}}
\caption{Proving Theorem~\ref{T:Main}: Case 1}
\label{F:forkcase1}
%Figure~\ref{F:forkcase1}
\end{figure}

\bigskip

\bigskip

\bigskip

\bigskip

\centerline{\includegraphics[scale=.9]{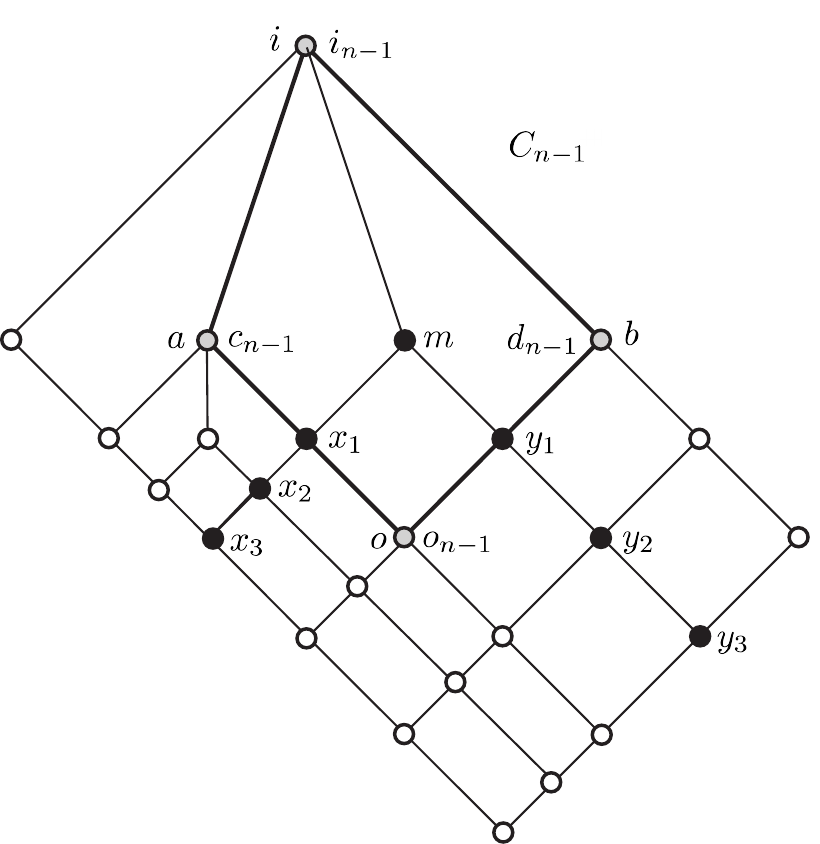}}
%\caption{forkcase2}
%\label{F:forkcase2}
%Figure~\ref{F:forkcase2}
%\end{figure}
%

\begin{figure}[htb]
%\centerline{\includegraphics[scale=.65]{}}
\caption{Proving Theorem~\ref{T:Main}: Case 2}
\label{F:forkcase2}
%Figure~\ref{F:forkcase2}
\end{figure}

\newpage

\begin{figure}[htb]
\centerline{\includegraphics[scale=1]{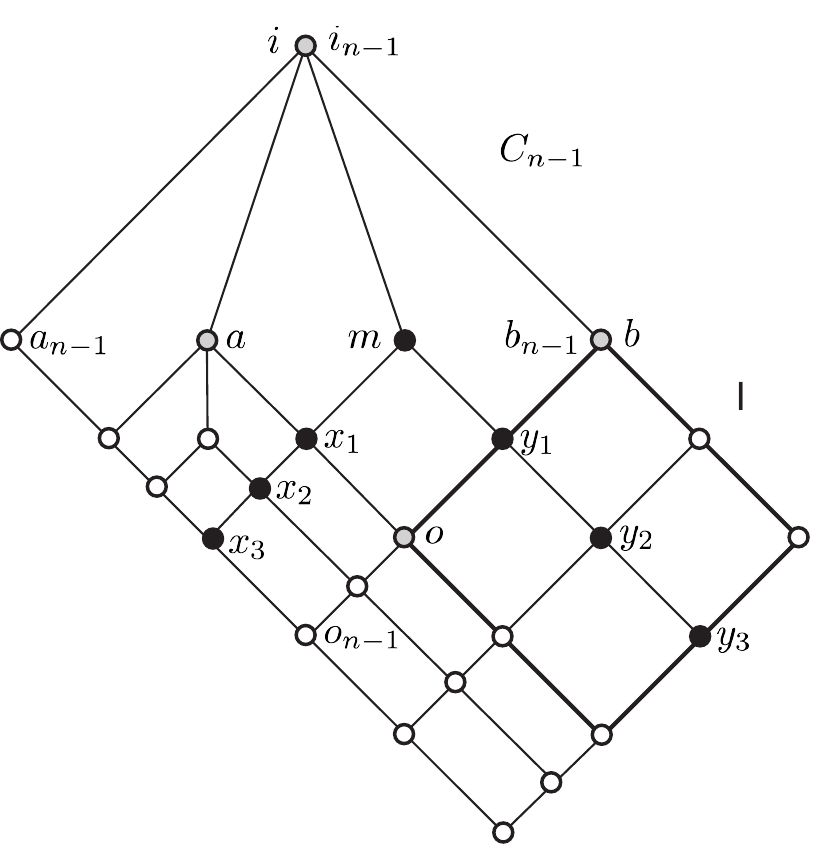}}
\caption{Proving Theorem~\ref{T:Main}: Case 3}
\label{F:forkcase3}
%Figure~\ref{F:forkcase3}
\end{figure}

\begin{figure}[h!]
\centerline{\includegraphics[scale = 1]{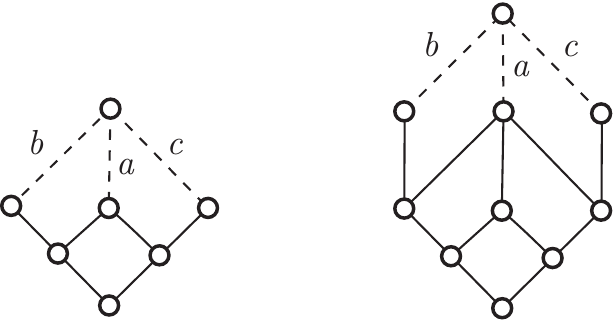}}
\caption{The lattice $\seven$, two diagrams}
\label{F:S7+}
%Figure~\ref{F:S7+}
\end{figure}

For instance, let (P) be the property:
the intervals $[o, a]$ and $[o, b]$ are chains and 
all elements of the lower boundary of $I$ are meet-reducible, except for $a, b$.
Then we get the main result of G. Cz\'edli \cite{gCcc}.

\begin{corollary}\label{C:rectint}%Corollary~\ref{C:rectint}
Let $L$ be an SPS lattice and let $I$ be a rectangular interval of $L$
with corners $a, b$.
Then $[o, a]$ and $[o, b]$ are chains and 
all the elements of the lower boundary of $I$ except for~$a, b$  are meet-reducible.
\end{corollary}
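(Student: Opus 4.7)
The plan is to instantiate Corollary~\ref{C:main} with the property $(P)$ stated in the corollary: that the intervals $[o,a]$ and $[o,b]$ are chains and that every element of the lower boundary other than the corners $a, b$ is meet-reducible. By Corollary~\ref{C:main}, the task then reduces to verifying $(P)$ for an arbitrary SR lattice with corners $a, b$.

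The first half of $(P)$, that the principal ideals $\id a$ and $\id b$ are chains (and thus coincide with the segments of the left and right boundary chains going from $o$ up to the respective corner), I would prove by induction on $\Rank K$ using Theorem~\ref{T:Structure}. The base case of a grid is immediate, since in $\SC_s \times \SC_t$ the ideal below either corner is literally a maximal chain on the boundary. For the inductive step, I would use that the fork inserted at the minimal covering square $C_{n-1}$ produces the new chains $x_1, x_2, \dots$ and $y_1, y_2, \dots$ inside the distributive ideals $\id{c_{n-1}}$ and $\id{d_{n-1}}$, and verify that this fork insertion neither changes the corners of the enlarged lattice nor introduces any element below them that is incomparable with the existing lower boundary chains.

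Once $\id a$ and $\id b$ are known to be chains, the second half of $(P)$ falls out quickly. The element $o$ has at least two distinct upper covers, namely the first element of the left lower boundary chain and the first element of the right lower boundary chain, so $o$ is meet-reducible. For any element $u$ with $o < u < a$ on the left lower boundary, rectangularity forbids $u$ from being doubly-irreducible, while the first half of $(P)$ shows $\id u \subseteq \id a$ to be a chain, so $u$ has a unique lower cover and is therefore join-irreducible; the two together force $u$ to be meet-reducible. The symmetric argument handles elements strictly between $o$ and $b$ on the right lower boundary.

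The main obstacle is the structural claim that $\id a$ and $\id b$ are chains; the remainder is a short deduction from rectangularity together with the dichotomy between meet- and join-irreducibility in a non-doubly-irreducible element. Since this chain property of SR lattices is already recorded in CFL2 and in the original papers of Cz\'edli and Schmidt~\cite{CS13}, in practice I would cite it and then apply Corollary~\ref{C:main} to conclude.
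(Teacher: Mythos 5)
Your proposal is correct and follows essentially the same route as the paper: the paper's entire argument is to instantiate Corollary~\ref{C:main} with exactly this property (P) of SR lattices, treating (P) itself as known from the literature. Your additional sketch of why (P) holds for an SR lattice (the chain property of $[o,a]$ and $[o,b]$, plus the observation that a join-irreducible, non-doubly-irreducible boundary element must be meet-reducible) only fills in detail the paper leaves implicit, and is sound.
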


Another nice application is the following.

\begin{corollary}\label{C:meet}%Corollary~\ref{C:meet}
Let $L$ be an SPS lattice and let $I$ be a rectangular interval of $L$
with corners $a, b$.
Then for any $x \in I$, the following equation holds:
\[
   x = (x \mm a) \jj (x \mm b).
\]
\end{corollary}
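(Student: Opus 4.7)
The plan is to reduce, via Theorem~\ref{T:Main}, to the case where $I$ is itself a slim rectangular lattice with corners $a, b$, and then to prove the identity $x = (x \mm a) \jj (x \mm b)$ in every SR lattice $K$ with corners $a, b$ by induction on $n = \Rank K$, using the Strong Structure Theorem~\ref{T:Structure}. Since the identity is a property of SR lattices in the sense of Corollary~\ref{C:main}, this will be enough.

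The reduction step is straightforward: Theorem~\ref{T:Main} gives that $I$ is an SR lattice with the same corners $a, b$, and since $I$ is an interval of $L$, the meets and joins appearing in the identity computed in $L$ coincide with those computed in $I$. So it suffices to establish the identity for an arbitrary SR lattice $K$ with left corner $a$ and right corner $b$.

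For the induction, the base case $n = 0$ (when $K$ is a grid) is distributive, and the identity reduces to $x = x \mm (a \jj b)$, which holds since $a \jj b$ is the top of $K$. For the inductive step, I will apply Theorem~\ref{T:Structure} to write $K = K^-[C_{n-1}]$ with $K^-$ an SR lattice of rank $n-1$, $C_{n-1} = \set{o_{n-1}, c_{n-1}, d_{n-1}, i_{n-1}}$ a $4$-cell of $K^-$, and both $\id{c_{n-1}}$ and $\id{d_{n-1}}$ distributive. Fork insertion does not alter the doubly-irreducible structure of the boundary, so the corners of $K^-$ coincide with those of $K$. For any element $x$ already in $K^-$, the identity transfers from $K^-$ to $K$, since $K^-$ is a sublattice of $K$ by Lemma~\ref{L:delete}. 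Hence only the elements added by the fork---the middle element $m$ and the side-chain elements $x_1, x_2, \dots$ and $y_1, y_2, \dots$ of Figure~\ref{F:fork}---need separate verification.

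To handle these new elements I will split into cases according to the placement of $C_{n-1}$ relative to $a$ and $b$: when $i_{n-1} \le a$ (respectively, $i_{n-1} \le b$), all new elements lie below one corner and the identity is immediate. The remaining case, where $C_{n-1}$ straddles the upper region of $K$ with $i_{n-1}$ comparable to neither corner, is where the real work sits. The key lever is the distributivity of the principal ideals $\id{c_{n-1}}$ and $\id{d_{n-1}}$ from Theorem~\ref{T:Structure}: it will let me compute $m \mm a$ and $m \mm b$ inside a distributive sub-diagram around the $4$-cell and verify that $(m \mm a) \jj (m \mm b) = m$, after which the side-chain elements $x_i, y_i$ fall to the same argument. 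Pinning down these meets and confirming their join is $m$ will be the main technical obstacle.
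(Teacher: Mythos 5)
Your reduction step coincides with the paper's: Corollary~\ref{C:meet} is presented there as an instance of Corollary~\ref{C:main}, taking (P) to be the known fact that in an SR lattice every element is the join of its meets with the two corners. The paper does not reprove that fact; it is equivalent to the assertion in Section~5 that the map $\gy$ of \eqref{E:gy} is a meet-embedding (apply injectivity of $\gy$ to $x$ and $y=(x\wedge a)\vee(x\wedge b)$: from $y\le x$ one gets $y\wedge a=x\wedge a$ and $y\wedge b=x\wedge b$, so $x=y$), which is quoted from Gr\"atzer--Knapp. Your plan to reprove the property from scratch by induction on $\Rank K$ via Theorem~\ref{T:Structure} is a legitimate alternative route, and the easy parts are handled correctly: the grid base case, the transfer of the identity to elements of the sublattice $K^-$, and the cases $i_{n-1}\le a$ or $i_{n-1}\le b$.

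However, there is a genuine gap exactly where the induction has content: you explicitly defer the computation of $m\wedge a$ and $m\wedge b$, the verification that their join is $m$, and the treatment of the side-chain elements $x_j$, $y_j$ (``will be the main technical obstacle'' is a placeholder, not an argument). Since the new fork elements are the only elements of $K$ not already covered by the inductive hypothesis, the inductive step is unproved precisely for the elements it is supposed to handle. The obstacle does admit a clean resolution using material already in the paper: the two chains of the fork terminate in elements $x_{k_l}$ and $y_{k_r}$ lying on the lower boundary chains $\Chl{L}$ and $\Chr{L}$, which by Corollary~\ref{C:rectint} are the intervals below $a$ and below $b$; hence $x_{k_l}\le m\wedge a$ and $y_{k_r}\le m\wedge b$, and since $m=x_{k_l}\vee y_{k_r}$ (the observation made in the paper's proof that $\E D[C]$ is again a natural diagram, Lemma~\ref{L:C2}) one gets $m\le(m\wedge a)\vee(m\wedge b)\le m$. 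The intermediate elements $x_j$, $y_j$ then require the distributivity of $\id{c_{n-1}}$ and $\id{d_{n-1}}$, again as in that lemma. Until you supply this, your argument is an outline rather than a proof; alternatively, you could simply cite the meet-embedding property of \eqref{E:gy}, which is the paper's intended route.
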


Here is a more elegant way to formulate the last result.

\begin{corollary}\label{C:abc}%Corollary~\ref{C:abc}
Let $L$ be an SPS lattice and let $a,b,c$ be 
pairwise incomparable elements of $L$. 
If $a$ is to the left of $b$, and $b$ is to the left of $c$, then 
\[
   b = (b \mm a) \jj (b \mm c).
\]
\end{corollary}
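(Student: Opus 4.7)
The plan is to reduce Corollary~\ref{C:abc} to Corollary~\ref{C:meet} by exhibiting an appropriate rectangular interval. Set $o = a \mm c$, $i = a \jj c$, and let $I = [o, i]$. Then $a$ and $c$ are complementary in $I$ by construction, and the ``left of'' relation on incomparable elements of a planar lattice is transitive, so from $a$ left of $b$ and $b$ left of $c$ we conclude that $a$ is to the left of $c$. Consequently $I$ is a rectangular interval of $L$ with corners $a$ and $c$, and Corollary~\ref{C:meet} will apply to any element of $I$.

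The remaining step is to verify that $b \in I$, that is, $a \mm c \le b \le a \jj c$. This is where planarity enters in an essential way: when three pairwise incomparable elements of a planar lattice are arranged left-to-right in this order, the middle one is sandwiched between the meet and the join of the outer two. I would invoke this as a standard property of planar lattices from CFL2.

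Once $b \in I$ has been established, Corollary~\ref{C:meet} applied to $x = b$ with corners $a, c$ of $I$ yields immediately
\[
   b = (b \mm a) \jj (b \mm c),
\]
which is exactly the desired identity. No further use of the fork-extension machinery is needed, since Corollary~\ref{C:meet} already packages that work.

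The main obstacle is only the planar-geometry assertion that places $b$ in $[a \mm c, a \jj c]$. If that is taken as a black-box fact about planar lattices, the entire argument is just two lines: build $I = [a \mm c, a \jj c]$, check it is rectangular with corners $a, c$, and apply Corollary~\ref{C:meet}.
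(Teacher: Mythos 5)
Your reduction is exactly the route the paper intends: it offers no separate proof of Corollary~\ref{C:abc}, presenting it merely as a reformulation of Corollary~\ref{C:meet} via the rectangular interval $I=[a\mm c,\,a\jj c]$ with corners $a$ and $c$. The two facts you leave as black boxes---transitivity of the left-of relation on pairwise incomparable elements and the sandwich containment $a\mm c\le b\le a\jj c$---are both standard consequences of the Kelly--Rival lemma on maximal chains, so the argument is complete.
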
 

\section{Special diagrams}

\subsection{Natural diagrams}

SR lattices have some particularly nice diagrams
such as the \emph{natural diagrams} of my paper with E. Knapp~\cite{GKn10},
which laid the foundation of rectangular lattices.
Natural diagrams were discovered more than a~dozen years ago, many years before the appearance of it competitor, the $\E C_1$-diagrams of  G.~Cz\'edli---see the next section.

For an SR lattice $L$, 
let $\Chl L$ be the lower left and $\Chr L$ the lower right boundary chain of $L$, respectively, and let $\cornl L$ be the left and
$\cornr L$ the right corner of $L$, respectively.

We regard $G = \Chl L \times \Chr L$ as a planar lattice, 
with $\Chl L = \Chl G$ and $\Chr L = \Chr G$. 
Then the map
\begin{equation}\label{E:gy}%\eqref{E:gy}
\gy \colon  x \mapsto (x \mm \cornl L, x \mm \cornr L)
\end{equation}
is a meet-embedding of $L$ into $G$;
the map $\gy$ also preserves the bounds.
Therefore, the image of $L$ under $\gy$ in $G$ is a diagram of $L$, 
we call it the \emph{natural diagram} representing $L$. 
For~instance, the second diagram of Figure~\ref{F:S7+} 
shows the natural diagram representing~ $\seven$.

Let $L$ be an SR lattice.
By the Structure Theorem, Strong Version,
we can represent~$L$ in the form $L = K[C]$, 
where $K$ is an SR lattice, 
$C = \set{o, c, d, i}$ is a $4$-cell of~$K$ satisfying that $\id c$ and  $\id d$ are distributive.
Let $\E D$ be a diagram of $K$.
We~form the diagram $\E D[C]$ 
by adding the elements $m, x_1, \dots$, and  $m, y_1, \dots$, 
as in the last diagram of Figure~\ref{F:fork}, so that the lines spanned by
the elements $m, x_1, \dots$ and m, $y_1, \dots$ are both normal.

\begin{lemma}\label{L:C1}
%Lemma~\ref{L:C1}
Let $L$, $C$, $K$, $\E D$, and $\E D[C]$ be as in the previous paragraph.
Then $\E D[C]$ is a diagram of~$L$.
\end{lemma}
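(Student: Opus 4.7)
The plan is to verify two things about $\E D[C]$: that it is a planar drawing (no improper edge crossings), and that its vertex set and edges represent exactly the covering relations of $L = K[C]$.

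I would begin by localizing the argument. The recipe for $\E D[C]$ modifies $\E D$ only inside the quadrilateral region bounded by the 4-cell $C = \set{o,c,d,i}$, together with a rerouting of the descending boundary edges from $o$ up to $c$ and from $o$ up to $d$ through the new chains $x_1, x_2, \dots$ and $y_1, y_2, \dots$. Since $C$ is a 4-cell of the planar diagram $\E D$, its interior contains no vertex of $K$, so placing $m$ together with the new chains inside this region introduces no vertex collision with existing elements of $K$.

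The heart of the argument is a geometric step: the chains $m, x_1, \dots$ and $m, y_1, \dots$ must be drawn as normal (monotone) lines. Here I would invoke the hypothesis that $\id{c}$ and $\id{d}$ are distributive. By the base case of Theorem~\ref{T:Structure} (distributive SR lattices are exactly grids), the sub-diagram of $\E D$ restricted to $\id{c}$ is a grid, and in particular its right boundary is already a single normal line; symmetrically for $\id{d}$. Therefore the new left chain can be drawn as a normal line flush against the right boundary of $\id{c}$, and the new right chain flush against the left boundary of $\id{d}$, without crossing any edge of $\E D$.

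It remains to check that the depicted relations are exactly the coverings of $L$. The coverings of $K$ unchanged by the fork insertion are represented by their original edges in $\E D$; the genuinely new coverings of $L$---namely the $x_i$'s forming a chain below $m$, the $y_j$'s symmetrically, and the cover $m \prec i$---are precisely the edges added by the recipe. The main obstacle I expect is the geometric step above: without distributivity of $\id{c}$ and $\id{d}$, the right boundary of $\id{c}$ (respectively the left boundary of $\id{d}$) could have interior bends forced by the presence of additional $\seven$-sublattices inside $\id{c}$, leaving no room to route the new chain as a single normal line. Distributivity is precisely the hypothesis that rules this out, and the proof ultimately reduces to this planarity-preservation observation.
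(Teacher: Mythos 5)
The paper's own proof of this lemma is the single sentence ``This is obvious,'' so there is no argument to compare routes with; the question is whether the details you supply are sound, and the central one is not. You treat the fork insertion as a modification confined to the quadrilateral region of the $4$-cell $C$, with the chains $x_1, x_2, \dots$ and $y_1, y_2, \dots$ merely subdividing the two lower edges of $C$. That is not the construction: only $x_1$ lands on the edge $[o,c]$ (and $y_1$ on $[o,d]$); after that the chain $x_1 \succ x_2 \succ \cdots$ leaves $C$ and travels through a whole sequence of $4$-cells of $K$, subdividing one edge of each, until it reaches the boundary --- this is exactly what Figure~\ref{F:fork} shows, where the black-filled fork elements are not contained in the grey-filled square. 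Consequently your planarity check (``the interior of $C$ contains no vertex of $K$, so there is no collision'') does not address the actual issue, namely whether the two new \emph{normal} lines can traverse their entire corridors of $4$-cells without crossing existing edges of $\E D$; and your accounting of covering relations is incomplete, since each $x_k$ and $y_k$ destroys a cover of $K$ (the edge it subdivides) and creates two new ones, in addition to the covers $x_1 \prec c$ and $y_1 \prec d$ that you omit.

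The distributivity step also has two gaps. First, the base case of Theorem~\ref{T:Structure} says that a distributive slim \emph{rectangular} lattice is a grid; the ideal $\id c$ is not asserted to be rectangular, so ``$\id c$ is distributive, hence a grid'' does not follow from that citation. Second, and more importantly, even if $\id c$ is a grid as a lattice, nothing forces the given diagram $\E D$ to render its boundary chains as single straight normal lines: you are conflating the lattice-theoretic structure of $\id c$ with its geometric realization in the particular diagram $\E D$. In the paper the lemma is only ever applied when $\E D$ is a natural (equivalently, $\E C_1$-) diagram, where the positions of all elements are rigidly determined by \eqref{E:gy} and the required geometric room is indeed available; a correct proof must either restrict to such diagrams or establish the feasibility of the two normal lines directly from properties of $\E D$, not from properties of the abstract lattice.
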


\begin{proof}
This is obvious.
\end{proof}

\begin{lemma}\label{L:C2}
%Lemma~\ref{L:C2}
Let us make the assumptions of Lemma~\ref{L:C2}.
If $\E D$ is a natural diagram of $K$, 
then $\E D[C]$ is a natural diagram of~$L$.
\end{lemma}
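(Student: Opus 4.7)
The plan is to verify directly that the diagram $\E D[C]$ coincides with the image of the natural-diagram map $\gy_L$ of~\eqref{E:gy}; that is, every element of $L = K[C]$ is placed in $\E D[C]$ at the grid coordinates $(x \mm \cornl L, x \mm \cornr L)$ in $\Chl L \times \Chr L$.

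First, I would observe that fork insertion at the $4$-cell $C$ preserves corners: since the new elements $m, x_1, \dots, x_k, y_1, \dots, y_l$ all sit inside $C$, no new doubly irreducible element appears on the boundary of $L$ and no old one disappears, so $\cornl L = \cornl K$ and $\cornr L = \cornr K$. Because $K$ is a sublattice of $L$, for every $x \in K$ the meets $x \mm \cornl K$ and $x \mm \cornr K$ used to compute $\gy_K(x)$ agree with the corresponding meets in $L$; hence the restriction of $\E D[C]$ to $K$ is, by hypothesis on $\E D$, already the natural diagram of $K$.

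It remains to place the new elements. For $m$ I would show $m \mm \cornl L = c \mm \cornl L$: the inequality $\le$ uses $m < i$ together with $i \mm \cornl L = c \mm \cornl L$, while the reverse uses $c \mm \cornl L \in \id c \subseteq \id m$. Symmetrically $m \mm \cornr L = d \mm \cornr L$, so $\gy_L(m)$ is the intersection of the normal lines through $c$ and $d$---which is exactly where $\E D[C]$ places $m$. For the chains $x_1, x_2, \dots$ and $y_1, y_2, \dots$, I would exploit the distributivity of $\id c$ and $\id d$ guaranteed by Theorem~\ref{T:Structure}: this lets me view the left half of the fork as a sub-grid whose left projection is the appropriate segment of $\Chl L$, within which the natural-diagram placement and the normal-line placement of $\E D[C]$ coincide by direct inspection; an analogous argument on the right takes care of the $y_j$'s.

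The delicate step, and the one I expect to be the main obstacle, is this last coordinate-matching for the $x_i$'s and $y_j$'s. The clean route is to use distributivity to identify $\id c$, extended by the new chain of $x_i$'s, with an honest sub-grid, so that the natural diagram there is forced by its boundary chains; then the normal-line recipe for $\E D[C]$ has no choice but to agree with $\gy_L$. After that observation, pasting the $K$-part of $\E D[C]$ to the two newly placed chains and the element $m$ yields $\E D[C] = \gy_L(L)$, which is the claim.
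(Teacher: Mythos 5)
Your overall strategy---verify directly that $\E D[C]$ realizes the map $\gy$ of \eqref{E:gy}---is in the same spirit as the paper's proof, which checks that every new element is a join $u_l \jj u_r$ with $u_l \in \Chl L$ and $u_r \in \Chr L$, thereby pinning down its grid position. But your execution rests on a wrong picture of fork insertion, and two of your key claims are false. First, the new elements do \emph{not} all sit inside $C$: the two branches $x_1, x_2, \dots$ and $y_1, y_2, \dots$ descend through successive $4$-cells until they reach the lower boundary, so their last members $x_{k_l}$ and $y_{k_r}$ are \emph{new} elements of $\Chl L$ and $\Chr L$. The corners are indeed unchanged, but the lower boundary chains---and hence the ambient grid $\Chl L \times \Chr L$---grow by one element on each side; this enlargement is exactly what creates room for the new elements, and your argument never accounts for it. Second, $m$ is not above $c$ and $d$: it is a new lower cover of $i$ incomparable to both, with $m \mm c = x_1 \prec c$. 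Hence $\id c \subseteq \id m$ is false, and with it your computation $m \mm \cornl L = c \mm \cornl L$. Already for $L = \seven = (\SC 2 \times \SC 2)[C]$ with $C = \set{o,a,b,t}$ one has $\cornl L = a$ and $m \mm a \prec a = c \mm \cornl L$. The correct coordinates are $m \mm \cornl L = x_{k_l}$ and $m \mm \cornr L = y_{k_r}$, equivalently $m = x_{k_l} \jj y_{k_r}$, which is precisely the identity the paper's proof records; the intersection of the normal lines through $c$ and $d$ is where $i$ sits, not $m$.

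Your idea of handling the remaining $x_i$ and $y_j$ by the distributivity of $\id c$ and $\id d$ is the right one (it is what the paper invokes), but it has to be run in the enlarged grid: the point is that each $x_i$ lies on the normal line through the new boundary element $x_{k_l}$, that is, $x_i = x_{k_l} \jj u$ for a suitable $u \in \Chr K$, and dually for the $y_j$. Once the description of the fork is corrected (the branches reach the boundary; $m$ is incomparable to $c$ and $d$), the coordinate-matching you outline can be carried through, and it then coincides with the paper's argument.
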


\begin{proof}
So let $\E D$ be a natural diagram of $K$.
Let the line $m, x_1, \dots$ terminate with~$x_{k_l}$
and the line $m, y_1, \dots$ with $y_{k_r}$.
We have to show that all the new elements in $L$
can be represented as a join $u_l \jj u_r$, 
where $u_l \in \Chl L$ and  $u_r \in \Chr L$.
Indeed, $m = x_{k_l} \jj x_{k_r}$. 
The others follow from the distributivity assumptions.
\end{proof}

\subsection*{$\E C_1$-diagrams}

This research tool, introduced by G. Cz\'edli,
has been playing an important role in some recent papers,
see G. Cz\'edli \cite{gC17}--\cite{gCcc},
G. Cz\'edli and G.~Gr\"atzer~\cite{CG21}, and G.~Gr\"atzer~\cite{gG21};
for the definition, see G.~Cz\'edli \cite{gC17} and G.~Gr\"atzer~\cite{gG21}.

In the diagram of an SR lattice $K$,
a \emph{normal edge} (\emph{line}) 
has a slope of $45\degree$ or~$135\degree$.
Any edge (line) of slope 
strictly between $45\degree$ and $135\degree$ is \emph{steep}.

Figure~\ref{F:S7+} depicts the lattice  $\seven$.
A \emph{peak sublattice}~$\seven$ 
(\emph{peak sublattice}, for short) of a lattice $L$ is a sublattice 
isomorphic to  $\seven$ such that the three edges at the top 
are covers in the lattice $L$.

\begin{definition}
%Definition~\ref{D:well}
A diagram of a slim rectangular $L$ is a \emph{${\E C}_1$-diagram},
if the middle edge of a peak sublattice
is steep and all other edges are normal.
\end{definition}

In other words, an edge is steep if it is the middle edge of a peak sublattice;
if an edge is not the middle edge of a peak sublattice, then it is normal.

\begin{theorem}\label{T:well}
%Theorem~\ref{T:well}
Every slim rectangular lattice $L$ has a ${\E C}_1$-diagram.
\end{theorem}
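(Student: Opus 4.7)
The plan is to induct on $n = \Rank L$, using the Strong Structure Theorem (Theorem~\ref{T:Structure}) together with the fork-insertion recipe $\E D \mapsto \E D[C]$ of Lemmas~\ref{L:C1} and~\ref{L:C2}.

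For the base case $n=0$, the lattice $L$ is a grid, which contains no $\seven$-sublattice and therefore no peak sublattice. The standard grid diagram has every edge on one of the two normal slopes $45^{\circ}$ or $135^{\circ}$, so it is trivially a $\E C_1$-diagram.

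For the inductive step, assume the theorem for SR lattices of rank less than~$n$. Using Theorem~\ref{T:Structure}, write $L = K[C]$, where $K$ is an SR lattice of rank $n-1$, $C = \set{o, c, d, i}$ is a $4$-cell of $K$, and both principal ideals $\id c$ and $\id d$ are distributive in $K$. By induction, fix a $\E C_1$-diagram $\E D$ of $K$. Form $\E D[C]$ as in the paragraph preceding Lemma~\ref{L:C1}: draw the new middle edge $[m,i]$ vertically, so that it is steep, and lay out the two new chains $m, x_1, \dots, x_{k_l}$ and $m, y_1, \dots, y_{k_r}$ along the normal slopes $135^{\circ}$ and $45^{\circ}$, respectively, as in the last diagram of Figure~\ref{F:fork}.

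It remains to verify the two clauses of the definition for $\E D[C]$. The subdiagram on the old elements is exactly $\E D$, so every peak sublattice of $K$ persists as a peak sublattice of $L$ with unchanged edges, and its $\E C_1$-property is inherited. The fork insertion creates the canonical peak $\seven$ with middle $m$ and top $i$, whose middle edge $[m,i]$ is steep by construction and whose other edges lie along the normal chains or on the boundary of $C$, hence are all normal. The only nonroutine point, and the main obstacle, is to rule out \emph{unintended} new peaks: a copy of $\seven$ in $L$ whose middle edge would be an old edge of $\E D$ or one of the new normal chain edges. This is precisely where the hypothesis that $\id c$ and $\id d$ are distributive in $K$ is needed; distributivity prevents $m$, or any of the new chain elements, from serving as a non-middle element of some additional peak, so the canonical peak at~$i$ is the only one added. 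Consequently every steep edge of $\E D[C]$ is the middle edge of some peak and every non-middle edge remains normal, so $\E D[C]$ is a $\E C_1$-diagram of~$L$, completing the induction.
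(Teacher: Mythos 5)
Your proof is correct and is essentially the paper's own argument: the paper obtains Theorem~\ref{T:well} from Theorem~\ref{T:C1=natural} together with the existence of natural diagrams, and the proof of Theorem~\ref{T:C1=natural} is exactly your induction on $\Rank L$ via the Strong Structure Theorem and the fork-insertion step $\E D \mapsto \E D[C]$. The only point you elaborate that the paper dismisses as ``trivial'' (its second Lemma~\ref{L:C2}) is the verification that no unintended peak sublattices arise, which you correctly identify as the crux of the inductive step.
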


This was proved in G. Cz\'edli \cite{gC17}.
My note \cite{gG21a} presents a short and direct proof.

\section{Natural diagrams and  ${\E C}_1$-diagrams are the same}
\label{S:natural = $C_1$}
%Section~\ref{S:natural = $C_1$}

We start with a trivial statement.

\begin{lemma}\label{L:C2}
%Lemma~\ref{L:C2}
Let us make the assumptions of Lemma~\ref{L:C2}.
If $\E D$ is a ${\E C}_1$-diagram of $K$, 
then $\E D[C]$ is a ${\E C}_1$-diagram of~$L$.
\end{lemma}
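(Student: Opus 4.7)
The plan is to verify the ${\E C}_1$ condition on $\E D[C]$ edge by edge. By Lemma~\ref{L:C1}, $\E D[C]$ is a diagram of~$L$, and its edges split into three families: the edges inherited unchanged from $\E D$; the new top edge $[m,i]$ of the inserted fork; and the new edges running along the two descending chains $m \succ x_1 \succ x_2 \succ \cdots$ and $m \succ y_1 \succ y_2 \succ \cdots$, together with the connectors joining these chains to the left and right boundaries of~$K$. By the construction of $\E D[C]$, the edges of the third family are drawn normal (slope $45\degree$ or $135\degree$), and the edge $[m,i]$ is drawn steep.

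The main step is a matching between the peak sublattices of $L$ and those of~$K$. Using the structural description of fork insertion from G.~Cz\'edli and E.\,T.~Schmidt~\cite{CS13}, I would argue that every peak sublattice of $L$ is either a peak sublattice of $K$ (unaffected by the insertion) or is the single new peak sublattice~$S$ created at the top of~$C$, whose middle edge is exactly $[m,i]$.

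Granted this matching, the verification is short. For a peak inherited from~$K$, its middle edge was already steep in $\E D$ and its other edges normal, and these slopes are preserved in $\E D[C]$. For the new peak~$S$, the middle edge $[m,i]$ is steep by construction; the two upper edges $[c,i]$ and $[d,i]$ are normal as edges of the $4$-cell $C$ inherited from~$\E D$; and the four remaining edges incident with $m$, $m \mm c$, or $m \mm d$ along the fork chains are normal by construction. Conversely, no inherited edge that was normal in $\E D$ becomes the middle edge of a peak sublattice of~$L$, because the only new peak is~$S$ and its middle edge is the newly added $[m,i]$.

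The likely obstacle is the matching step, i.e., verifying that fork insertion creates no peak sublattice beyond~$S$. This should reduce to a short case analysis exploiting the distributivity of $\id c$ and~$\id d$ (which prevents any $\seven$ from sitting entirely inside $\id c \cup \id d$) together with the planar position of the new chains along the left and right sides of~$C$.
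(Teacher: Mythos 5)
Your strategy (classify the edges of $\E D[C]$, then match the peak sublattices of $L$ against those of $K$) is the natural one, and the paper offers no proof at all to compare it with---the lemma is simply declared trivial. However, your key matching claim is false as stated: fork insertion can create peak sublattices other than the canonical one~$S$. Write $\seven = \set{0, p, q, a, m, b, 1}$ with $p = a \mm m$ and $q = b \mm m$, take $K = \seven$, and insert a fork at the $4$-cell $C = \set{p, a, m, 1}$ (both $\id a$ and $\id m$ are distributive, so this is a legitimate step of the Structure Theorem). The new elements are the middle $\mu \prec 1$, the left leg $\xi_1$ subdividing $[p, a]$, and the right leg $\eta_1, \eta_2$ subdividing $[p, m]$ and $[0, q]$. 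In $L = K[C]$, the set $\set{\eta_2, \eta_1, q, \mu, m, b, 1}$ is a sublattice isomorphic to $\seven$ whose three top edges $[\mu,1]$, $[m,1]$, $[b,1]$ are covers of $L$, so it is a peak sublattice of $L$; it contains new elements, so it is not a peak sublattice of $K$, and it is not the canonical new peak $\set{p, \xi_1, \eta_1, a, \mu, m, 1}$. Its middle edge is the old edge $[m,1]$; if your matching held, that edge would have to be normal, and the lemma itself would fail.

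What saves the lemma---and what your argument needs instead---is the weaker invariant about \emph{middle edges} rather than about peak sublattices: the middle edges of peak sublattices of $L$ are exactly the middle edges of peak sublattices of $K$ together with $[m,i]$. (In the example, $[m,1]$ is already the middle edge of the peak $K = \seven$ itself, hence already steep in $\E D$.) A clean route is to show that $[u,v]$ is the middle edge of a peak sublattice if and only if $u$ is a non-extremal lower cover of $v$ (this uses Corollary~\ref{C:abc} and planarity), and then track how lower-cover profiles change under fork insertion: $i$ gains the lower cover $m$ strictly between $c$ and $d$; the elements $c$, $d$, and the tops of the cells traversed by the legs trade one lower cover for another without changing their number; all of the latter lie in $\id c \cup \id d$, where distributivity caps the number of lower covers at two; and every new element has at most two lower covers. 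You also need, and do not check, that every peak sublattice of $K$ remains one in $L$: the legs subdivide old edges such as $[o,c]$, so old covers are destroyed, and a steep old edge that ceased to be the middle edge of a peak would violate the ${\E C}_1$ condition. This, too, follows from distributivity, since every subdivided edge lies in $\id c \cup \id d$, while a peak sublattice of $K$ with a top edge there would be contained in $\id c$ or $\id d$.
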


Now we state our second result on SR lattices.

\begin{theorem}\label{T:C1=natural}
%Theorem~\ref{T:C1=natural}
Let $L$ be a SR lattice. 
Then a natural diagram of $L$ is a ${\E C}_1$-diagram. 
Conversely, every ${\E C}_1$-diagram is natural.
\end{theorem}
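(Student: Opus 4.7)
The plan is to induct on $n = \Rank L$ using the Structure Theorem, Strong Version (Theorem~\ref{T:Structure}), reducing both directions to the case of a single fork insertion. For $n = 0$, $L$ is a grid; it contains no $\seven$-sublattice, so no peak sublattices exist and both notions collapse to the same standard grid diagram with every edge normal. For the inductive step we write $L = K[C]$ with $K$ an SR lattice of rank $n-1$ and $C = \set{o, c, d, i}$ a $4$-cell whose principal ideals $\id{c}$ and $\id{d}$ are distributive. The key preliminary observation is that fork insertion only adds internal elements, so $\Chl L = \Chl K$, $\Chr L = \Chr K$ and the two corners agree in $K$ and $L$; consequently the natural meet-embedding $\gy_L$ of \eqref{E:gy} restricts to $\gy_K$ on $K$, and any planar diagram of $L$ restricts to a planar diagram of $K$.

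For \emph{natural implies ${\E C}_1$}, take a natural diagram $\E D_L$ of $L$; its restriction to $K$ is a natural diagram of $K$. By the inductive hypothesis this restriction is a ${\E C}_1$-diagram of $K$, and Lemma~\ref{L:C2} (the ${\E C}_1$ version) shows that its fork extension is a ${\E C}_1$-diagram of $L$. To finish, I must verify that $\E D_L$ is exactly this fork extension, i.e., that in $\E D_L$ the two new lines $m, x_1, \dots, x_{k_l}$ and $m, y_1, \dots, y_{k_r}$ are normal and the middle edge $m \prec i$ is steep. Computing $\gy_L$-coordinates of the new fork elements with the help of the distributivity of $\id{c}$ and $\id{d}$, I expect to find that the left line lies along one row of the grid $\Chl L \times \Chr L$ while the right line lies along one column, and that $m \prec i$ jumps in both coordinates and is therefore steep.

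For the converse \emph{${\E C}_1$ implies natural}, let $\E D_L'$ be a ${\E C}_1$-diagram of $L$ and restrict to $K$. Every edge of $K$ is an edge of $L$ with the same slope, and every peak sublattice of $K$ remains a peak sublattice of $L$ since the fork insertion at $C$ preserves the covers of $K$ (the new element $m$ sits only inside $C$, and $c \prec i$, $d \prec i$ persist). Hence the restriction is a ${\E C}_1$-diagram of $K$, so by induction it is a natural diagram of $K$. Lemma~\ref{L:C2} (the natural version) then produces a natural diagram of $L$ as its fork extension, and I verify that this extension coincides with $\E D_L'$ by matching the planar positions of $m$, the $x_i$, and the $y_j$.

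I expect the main obstacle to be exactly this positional matching step in both directions: showing that once the $K$-part of a given diagram is pinned down (either as a natural or as a ${\E C}_1$-diagram) and the two fork staircases are constrained to be normal, the planar positions of $m$, the $x_i$, and the $y_j$ are forced, so that the diagram must agree with the canonical fork extension of its restriction to $K$. This rigidity should follow from the distributivity of $\id{c}$ and $\id{d}$, which collapses the new left and right staircases onto a single row and a single column of the grid $\Chl L \times \Chr L$, together with the essentially unique planar embedding of a distributive SPS interval.
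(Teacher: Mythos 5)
Your overall skeleton---induction on $\Rank L$ via the Structure Theorem, Strong Version, reduction to a single fork insertion $L = K[C]$, and the two lemmas stating that $\E D[C]$ inherits the natural and the ${\E C}_1$ property from $\E D$---is exactly the paper's. The difference is the direction of the inductive step: the paper works bottom-up (form the natural diagram of $K$, note that it is ${\E C}_1$ by induction, and conclude that $\E D[C]$ is simultaneously natural and ${\E C}_1$), whereas you work top-down, restricting the given diagram of $L$ to $K$ and then trying to recognize the given diagram as the canonical fork extension of its restriction.

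The top-down route founders on your ``key preliminary observation,'' which is false. Fork insertion does not only add internal elements: the two legs of the fork run down until they terminate \emph{on the lower boundary chains}---this is exactly why, in the proof of Lemma~\ref{L:C2}, the equation $m = x_{k_l} \jj y_{k_r}$ witnesses that $m$ is the join of an element of $\Chl{L}$ with an element of $\Chr{L}$. Hence $\Chl{L}$ and $\Chr{L}$ each have one more element than $\Chl{K}$ and $\Chr{K}$, a lower boundary edge of $K$ is subdivided, and a number of covers of $K$ are destroyed in $L$ (for instance $o \prec c$ and $o \prec d$ in the $4$-cell $C$, and the covers of every $4$-cell the legs pass through). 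Consequently the grid $\Chl{L} \times \Chr{L}$ is strictly larger than $\Chl{K} \times \Chr{K}$, so $\gy_L$ restricted to $K$ does not reproduce the natural diagram of $K$ (elements above the new boundary points are shifted); ``every edge of $K$ is an edge of $L$ with the same slope'' fails; and the restriction of a diagram of $L$ to the subset $K$ is not a diagram of $K$. So both of your restriction steps, and the cover-preservation claim in the converse direction, need to be repaired. The remaining core---that once the diagram of $K$ is pinned down, the positions of $m$, the $x_i$, and the $y_j$ are forced---is the rigidity you flag at the end but do not prove; the paper avoids it by constructing $\E D[C]$ bottom-up and invoking Lemmas~\ref{L:C1} and~\ref{L:C2} for both properties at once, at the cost of implicitly treating ``the'' natural diagram as canonical (which it is, being defined by the map $\gy$).
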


\begin{proof}
Let us assume that the SR lattice~$L$  
can be obtained from a~grid~$G$ by adding forks  $n$-times, where $n = \Rank L$.
We induct on $n$. The case $n = 0$ is trivial because then $L$ is a grid.
So let us assume that the theorem holds for $n - 1$.

By the Structure Theorem, Strong Version,
there is a SR lattice~$K$ 
and a $4$-cell $C = \set{o,a,b,i}$ of~$K$ satisfying that $\id c$ and $\id d$
are distributive 
such that $K$ can be obtained from the grid~$G$ by adding forks $(n-1)$-times
and also $L = K[C]$ holds. 

Now form the natural diagram $\E D$ of $K$. 
By induction, it is a ${\E C}_1$-diagram.
By~Lemma~\ref{L:C1}, the diagram $\E D[C]$ is both natural and ${\E C}_1$. 

We prove the converse the same way.
\end{proof}

Natural diagrams exist by definition. 
So Theorem~\ref{T:well} also follows from Theorem~\ref{T:C1=natural}.

G. Czédli \cite{gC17} also defined \emph{${\E C}_2$-diagrams}.
A ${\E C}_1$-diagram is ${\E C}_2$,
if any two edges on the lower boundary are of the same length.

We use Theorem~\ref{T:C1=natural} 
to prove two results of G. Cz\'edli \cite{gC17}.

\begin{theorem}\label{T:C-2}
%Lemma~\ref{T:C-2}
Let $L$ be a SR lattice. Then $L$ has a ${\E C}_2$-diagram.
\end{theorem}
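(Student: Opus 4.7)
The plan is to obtain a $\E C_2$-diagram of $L$ by producing a natural diagram with a judicious choice of axis scales. By Theorem~\ref{T:C1=natural}, every natural diagram of $L$ is already a $\E C_1$-diagram, so the only thing that remains is to arrange that all edges on the lower boundary have the same Euclidean length.

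First, I would observe that under the embedding $\gy \colon L \to G = \Chl L \times \Chr L$ which defines the natural diagram, the lower boundary of $L$ is carried bijectively onto the lower boundary of~$G$: for $x \in \Chl L$ we have $\gy(x) = (x, 0) \in \Chl G$, and symmetrically on the right. Hence every edge on the lower boundary of $L$ is, in the natural diagram, an edge of the underlying grid $G$.

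Next, when drawing $G$ I would choose a single Euclidean step length $\ell$ for both chains, rendering each cover in $\Chl L$ as a segment of slope $135\degree$ and length~$\ell$, and each cover in $\Chr L$ as a segment of slope $45\degree$ and length~$\ell$. With this choice every grid edge has length exactly $\ell$; in particular, every lower-boundary edge of~$L$ has length~$\ell$. The edges coming from fork insertions (inserted one fork at a time via the Structure Theorem, Strong Version) may still be drawn at any slope strictly between $45\degree$ and $135\degree$, so the $\E C_1$ property inherited from Theorem~\ref{T:C1=natural} survives the rescaling.

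I do not expect a genuine obstacle here. The substantive ingredient is the identification of the lower boundary of $L$ with that of $G$, which is immediate from the formula for~$\gy$; the rest is simply a harmless uniform choice of scale. Thus the rescaled natural diagram witnesses that $L$ has a $\E C_2$-diagram.
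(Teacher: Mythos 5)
Your proposal is correct and follows essentially the same route as the paper: both take the natural diagram (which is a $\E C_1$-diagram by Theorem~\ref{T:C1=natural}) and redraw the two boundary chains with a uniform edge length so that the lower-boundary edges, which are grid edges, all have the same size. Your explicit remark that $\gy$ carries the lower boundary of $L$ onto the lower boundary of the grid is a detail the paper leaves implicit, but it does not change the argument.
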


\begin{proof}
Let $C_l$ and $C_r$ be chains of the same length as 
$\Chl L$ and $\Chr L$, respectively.
Then $\Chl L \times \Chr L$ and $C_l \times C_r$ are isomorphic,
so we can regard the map $\gy$, see~\eqref{E:gy}, 
as a map from $L$ into $C_l \times C_r$,
a bounded and meet-preserving map.
So the natural diagram it defines is the diagram of the lattice $L$.

If we choose $C_l$ and $C_r$ so that the edges are of the same size, 
we obtain a ${\E C}_2$-diagram of the SR lattice $L$.
\end{proof}

Natural diagrams have a left-right symmetry. 
The symmetric diagram is obtained with the map
\begin{equation}\label{E:gy2}%\eqref{E:gy2}
\widetilde{\gy} \colon  x \mapsto (x \mm \cornr L, x \mm \cornl L)
\end{equation}
replacing \eqref{E:gy}.

\begin{theorem}[Uniqueness Theorem]\label{T:Uniqueness}
%Theorem~\ref{T:Uniqueness}
Let $L$ be a slim rectangular lattice.
Then the ${\E C}_2$-diagram of $L$ is unique up to left-right symmetry.
\end{theorem}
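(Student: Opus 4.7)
The plan is to leverage Theorem~\ref{T:C1=natural}, which identifies ${\E C}_1$-diagrams with natural diagrams. Since every ${\E C}_2$-diagram is in particular a ${\E C}_1$-diagram, any ${\E C}_2$-diagram of $L$ is a natural diagram of $L$ subject to the additional metric constraint that all lower boundary edges have equal length. So the problem reduces to showing that a natural diagram, once equipped with uniform boundary edge length, is determined up to the left-right reflection that interchanges $\gy$ with $\widetilde{\gy}$ from~\eqref{E:gy} and~\eqref{E:gy2}.

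First I would fix an orientation by declaring the left corner of $L$ to be $\cornl L$; the opposite choice will produce the symmetric diagram via $\widetilde{\gy}$. A natural diagram of $L$ is then, by definition, the image of the meet-embedding
\[
  \gy \colon x \mapsto (x \mm \cornl L,\; x \mm \cornr L)
\]
inside the grid $\Chl L \times \Chr L$. The combinatorial position of every element $x \in L$ in the diagram is thus entirely determined by the geometric placement of the two boundary chains $\Chl L$ and $\Chr L$.

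Next I would argue that the ${\E C}_2$-condition pins down this placement up to rigid motion. The two lower boundary chains emanate from $o$ along normal edges, so at slopes $135\degree$ and $45\degree$, respectively; by the ${\E C}_2$-hypothesis they share a common uniform edge length. Once this edge length is chosen and $o$ is placed, $\Chl L$ and $\Chr L$ are geometrically determined as polygonal rays, and every vertex $\gy(x) = (x \mm \cornl L,\; x \mm \cornr L)$ then lands at a uniquely determined point of the plane. Hence the entire diagram is forced.

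The main obstacle is to exclude a genuinely different ${\E C}_2$-diagram: one that, while being a valid diagram of $L$ satisfying the ${\E C}_2$-constraints, is not obtainable from $\gy$ or $\widetilde{\gy}$. This is precisely where Theorem~\ref{T:C1=natural} does the work---every ${\E C}_1$-diagram is already a natural diagram---so no combinatorial freedom remains beyond the initial orientation. Combining this with the metric rigidity just described yields exactly the asserted uniqueness up to left-right symmetry.
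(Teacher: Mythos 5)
Your argument is correct and is essentially the proof the paper intends: the paper states the Uniqueness Theorem without an explicit proof, but its remark that Theorem~\ref{T:C1=natural} is used to derive Cz\'edli's results, together with the proof of Theorem~\ref{T:C-2} and the observation about the symmetric map $\widetilde{\gy}$ in~\eqref{E:gy2}, amounts to exactly your reduction. Namely, every ${\E C}_2$-diagram is a ${\E C}_1$-diagram, hence natural, hence the image of $\gy$ or $\widetilde{\gy}$, and the equal-edge-length condition then rigidifies the drawing of the grid $\Chl L \times \Chr L$, which is precisely your two-step argument.
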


\end{document}